\theoremstyle{plain}
\newtheorem{theorem}                 {Theorem}      [section]
\theoremstyle{definition}
\newtheorem{example}      [theorem]  {Example}
\newtheorem{definition}   [theorem]  {Definition}
\numberwithin{equation}{section}
\def \theo-intro#1#2 {\vskip .25cm\noindent{\bf Theorem #1\ }{\it #2}}
\def \rn{\mathbb R}
\def \cn{\mathbb C}
\def \H{\mathcal H}
\def \V{\mathcal V}
\def \sol{\mathfrak{sol}}
\def \lb#1#2{[#1,#2]}
\def \g{\mathfrak{g}}
\def \h{\mathfrak{h}}
\def \n{\mathfrak{n}}
\def \SLR#1{\text{\bf SL}_{#1}(\rn)}
\def \slr#1{\mathfrak{sl}_{#1}(\rn)}
\def \SU#1{\text{\bf SU}(#1)}
\def \su#1{\mathfrak{su}(#1)}
\def \nab#1#2{\hbox{$\nabla$\kern -.3em\lower 1.0 ex
\hbox{$#1$}\kern -.1 em {$#2$}}}
\begin{document}
\baselineskip 22pt \larger
\allowdisplaybreaks

\begin{footnotesize}
\begin{flushright}
CP3-ORIGINS-2010-8
\end{flushright}
\end{footnotesize}
\bigskip

\title{On the Existence of Harmonic morphisms\\
from three-dimensional Lie groups}

\author{Sigmundur Gudmundsson}
\author{Martin Svensson}

\keywords{harmonic morphisms, minimal submanifolds, Lie groups}

\subjclass[2000]{58E20, 53C43, 53C12}

\address
{Department of Mathematics, Faculty of Science, Lund University,
Box 118, S-221 00 Lund, Sweden}
\email{Sigmundur.Gudmundsson@math.lu.se}

\address
{Department of Mathematics \& Computer Science, and $\text{CP}^3$-Origins Centre of Excellence for Particle Physics Phenomenology, University of
Southern Denmark, Campusvej 55, DK-5230 Odense M, Denmark}
\email{svensson@imada.sdu.dk}

\dedicatory{To Professor John $\cn$ Wood on his sixtieth birthday.}

\begin{abstract}
In this paper we classify those three-dimensional Riemannian Lie
groups which admit harmonic morphisms to surfaces. 
\end{abstract}

\maketitle

\section{Introduction}

A harmonic morphism between two Riemannian manifolds is a map
with the property that its composition with any local harmonic
function on the target manifold is a local harmonic function
on the domain. These maps can be seen as an extension of conformal
mappings between Riemann surfaces. Harmonic morphisms were
introduced by C.~G.~J.~Jacobi \cite{Jacobi}, but the first
characterization of these in the context of Riemannian manifolds
was made by B.~Fuglede and T.~Ishihara \cite{Fug-1,T-Ish}.
Not surprisingly, harmonic morphisms must solve a non-linear,
overdetermined system of partial differential equations.
Therefore, there is no general existence theory; in fact,
there are examples of Riemannian manifolds which do not allow
any global, non-constant harmonic morphism between them \cite{Ville}.

With this in mind, it is natural to consider low-dimensional
situations when attempting to classify harmonic morphisms.
A submersive harmonic morphism gives rise to a conformal
foliation of its domain, and when the target manifold is
a surface, the leaves of this foliation are minimal submanifolds.
Hence any submersive harmonic morphism from a 3-manifold to
a surface gives rise to a conformal foliation by geodesics
of the domain. In a careful study of this situation, P.~Baird
and J.~C.~Wood proved that the Ricci curvature must be conformal on
the distribution orthogonal to the
leaves. It follows from this that there can be at most two
distinct conformal foliations by geodesics of a 3-manifold
with non-constant sectional curvature, and a similar result
is then true for the number of possible harmonic morphisms.
For precise statements of these results, we refer the reader
to Section 10.6 of \cite{Bai-Woo-book} or
Theorem \ref{thm:uniqueness} below. By using this, Baird and
Wood showed that the 3-dimensional Thurston geometry Sol
(see Section 3 below) \emph{does not allow any non-constant
harmonic morphisms to a surface, not even locally}.

We will here apply the results of Baird and Wood on conformal
foliations by geodesics to 3-dimensional Lie groups.
We give a complete classification of those $3$-dimensional
Riemannian Lie groups admitting harmonic morphisms to surfaces.
In particular, we show that Sol fits into a continuous
family of Lie groups, none of which admits any non-constant
local harmonic morphisms to surfaces.

For the general theory of harmonic morphisms, we refer to
the exhaustive book \cite{Bai-Woo-book} or the on-line
bibliography of papers \cite{Gud-bib}.

\section{Harmonic morphisms and minimal conformal foliations}

Let $M$ and $N$ be two manifolds of dimensions $m$ and $n$,
respectively. A Riemannian metric $g$ on $M$ gives rise to the
notion of a {\it Laplacian} on $(M,g)$ and real-valued {\it
harmonic functions} $f:(M,g)\to\rn$. This can be generalized to
the concept of {\it harmonic maps} $\phi:(M,g)\to (N,h)$ between
Riemannian manifolds, which are solutions to a semi-linear system
of partial differential equations, see \cite{Bai-Woo-book}.

\begin{definition}
A map $\phi:(M,g)\to (N,h)$ between Riemannian manifolds is
called a {\it harmonic morphism} if, for any harmonic function
$f:U\to\rn$ defined on an open subset $U$ of $N$ with $\phi^{-1}(U)$
non-empty,
$f\circ\phi:\phi^{-1}(U)\to\rn$ is a harmonic function.
\end{definition}

The following characterization of harmonic morphisms between
Riemannian manifolds is due to Fuglede and Ishihara.  For the
definition of horizontal (weak) conformality we refer to
\cite{Bai-Woo-book}.

\begin{theorem}\cite{Fug-1,T-Ish}
A map $\phi:(M,g)\to (N,h)$ between Riemannian manifolds is a
harmonic morphism if and only if it is a horizontally (weakly)
conformal harmonic map.
\end{theorem}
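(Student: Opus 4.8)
The plan is to reduce the whole statement to the \emph{composition law} relating the tension fields of $\phi$, of a function $f$ on $N$, and of the composition $f\circ\phi$. For a smooth map $\phi\colon(M,g)\to(N,h)$ and a smooth $f$ on $N$, a direct computation in normal coordinates yields
\[
\tau(f\circ\phi)=\trace_g\bigl(\nabla df(d\phi,d\phi)\bigr)+df(\tau(\phi)),
\]
where $\nabla df$ is the Hessian of $f$, $\tau(\phi)$ the tension field of $\phi$, and $\trace_g(\nabla df(d\phi,d\phi))=\sum_i(\nabla df)(d\phi(X_i),d\phi(X_i))$ for a local orthonormal frame $\{X_i\}$ on $M$. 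Since $f\circ\phi$ is harmonic precisely when $\tau(f\circ\phi)=0$, and $f$ is harmonic precisely when $\trace_h\nabla df=\Delta_N f=0$, the entire theorem becomes an assertion about this single identity. I will use throughout the pointwise characterisation of horizontal weak conformality with dilation $\lambda$: that the self-adjoint endomorphism $\phi_*\phi_*^{\,*}$ of the tangent space of $N$ equals $\lambda^2\,\mathrm{id}$ (allowing $\lambda=0$ at critical points).

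For the \textbf{\emph{if}} direction I assume $\phi$ is harmonic and horizontally weakly conformal. Letting $B=\nabla df$ with associated symmetric endomorphism $\tilde B$, the first term of the composition law is
\[
\sum_i B(\phi_*X_i,\phi_*X_i)=\trace\bigl(\phi_*^{\,*}\tilde B\,\phi_*\bigr)=\trace\bigl(\tilde B\,\phi_*\phi_*^{\,*}\bigr)=\lambda^2\trace\tilde B=\lambda^2\Delta_N f,
\]
so that $\tau(f\circ\phi)=\lambda^2\Delta_N f+df(\tau(\phi))$ at every point. With $\tau(\phi)=0$ and $\Delta_N f=0$ this vanishes, hence $f\circ\phi$ is harmonic and $\phi$ is a harmonic morphism.

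For the \textbf{\emph{only if}} direction I fix $x_0\in M$, set $y_0=\phi(x_0)$, and exploit the freedom to prescribe the low-order jet of harmonic functions at $y_0$. Working in geodesic normal coordinates centred at $y_0$, for any symmetric bilinear form $H$ on $T_{y_0}N$ with $\trace_h H=0$ and any covector $v$ there is a local harmonic function $f$ with $df(y_0)=v$ and $\nabla df(y_0)=H$. Taking $v=0$ first, the composition law forces $\sum_i H(d\phi(X_i),d\phi(X_i))=0$ for \emph{every} trace-free symmetric $H$; reading the left side as the pairing of $H$ against the pullback tensor $Q=\sum_i d\phi(X_i)\otimes d\phi(X_i)$, this says $Q$ is orthogonal to all trace-free symmetric tensors, hence proportional to the metric. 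Equivalently $\phi_*\phi_*^{\,*}=\lambda^2\,\mathrm{id}$ at $y_0$, which is exactly horizontal weak conformality. Feeding this back collapses the Hessian term to $\lambda^2\Delta_N f=0$, leaving $df(\tau(\phi))(x_0)=0$; letting $v=df(y_0)$ range over all covectors gives $\tau(\phi)(x_0)=0$, so $\phi$ is harmonic.

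The main obstacle is the existence of local harmonic functions with prescribed value, gradient, and (trace-free) Hessian at an arbitrary point of $N$: this is what lets the second-order and first-order contributions in the composition law be varied independently, thereby separating the conformality condition from the harmonicity condition. It rests on standard local solvability of $\Delta_N f=0$ (equivalently, on building harmonic polynomials in normal coordinates and correcting the lower-order curvature terms), together with the elementary linear-algebra fact identifying ``pullback tensor proportional to the metric'' with the equation $\phi_*\phi_*^{\,*}=\lambda^2\,\mathrm{id}$.
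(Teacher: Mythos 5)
This is a result the paper quotes from Fuglede and Ishihara without proof, so there is no in-paper argument to compare against; your proposal is the standard proof of the Fuglede--Ishihara characterisation (essentially the one in the cited references and in Baird--Wood's book), and it is correct. Both directions are handled properly: the composition law $\tau(f\circ\phi)=\trace_g\nabla df(d\phi,d\phi)+df(\tau(\phi))$ is the right reduction, the trace computation in the ``if'' direction is valid, and you correctly isolate the one nontrivial ingredient for the ``only if'' direction, namely the local existence of harmonic functions on $N$ with prescribed gradient and prescribed trace-free Hessian at a point, which separates the conformality and harmonicity conditions.
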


The next result gives the theory of harmonic morphisms a strong
geometric flavour. It also shows that the
case when the codomain is a surface is particularly interesting.

\begin{theorem}\cite{Bai-Eel}\label{theo:B-E}
Let $\phi:(M^m,g)\to (N^n,h)$ be a horizontally (weakly) conformal
map between Riemannian manifolds. If
\begin{enumerate}
\item[i.] $n=2$, then $\phi$ is harmonic if and only if $\phi$ has
minimal fibres at regular points; \item[ii.] $n\ge 3$, then two of
the following
conditions imply the other:
\begin{enumerate}
\item $\phi$ is a harmonic map, \item $\phi$ has minimal fibres at
regular points,
\item $\phi$ is horizontally homothetic.
\end{enumerate}
\end{enumerate}
\end{theorem}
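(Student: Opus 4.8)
The plan is to reduce both parts of the theorem to a single formula for the tension field $\tau(\phi)=\trace_g\nabla d\phi$ of a horizontally weakly conformal map, after which each assertion follows by elementary algebra involving the two coefficients $n-2$ and $m-n$. Throughout I work on the open set of regular points, where the dilation $\lambda$ is positive and the tangent bundle splits as $TM=\H\oplus\V$ with $\V=\ker d\phi$ and $\H=\V^\perp$. I fix a local orthonormal frame $\{X_1,\dots,X_n\}$ for $\H$ and $\{V_1,\dots,V_{m-n}\}$ for $\V$, and split the trace defining $\tau(\phi)$ into a horizontal sum $\sum_i(\nabla d\phi)(X_i,X_i)$ and a vertical sum $\sum_a(\nabla d\phi)(V_a,V_a)$.

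The vertical sum is the easy half. Since $d\phi(V_a)=0$, each term $(\nabla d\phi)(V_a,V_a)$ reduces to $-d\phi(\nabla_{V_a}V_a)$, and as $d\phi$ annihilates vertical vectors, only the horizontal component of $\sum_a\nabla_{V_a}V_a$ contributes. That component is, up to the factor $m-n$, the mean curvature vector $\mu^{\V}$ of the fibres, which is itself horizontal, being normal to the leaves of $\V$. Hence the vertical sum equals $-(m-n)\,d\phi(\mu^{\V})$.

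The horizontal sum is the main obstacle, and it is where horizontal conformality, in the form $h(d\phi X,d\phi Y)=\lambda^2 g(X,Y)$ for horizontal $X,Y$, must be used. Expanding $(\nabla d\phi)(X_i,X_i)$ as $\nabla^\phi_{X_i}(d\phi X_i)-d\phi(\nabla_{X_i}X_i)$, pairing $\sum_i(\nabla d\phi)(X_i,X_i)$ against $d\phi Z$ for an arbitrary horizontal test field $Z$, and differentiating the conformality identity along the frame produces, once the terms involving the second fundamental form of the horizontal distribution are separated out, exactly the derivatives of $\lambda^2$; one checks along the way that no component transverse to the image of $d\phi$ survives. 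The coefficient $n-2$ emerges from tracing the differentiated conformality relation over the $n$ horizontal directions, and the delicate point is the bookkeeping that isolates the genuine horizontal gradient of $\ln\lambda^2$ from the terms that are killed by $d\phi$. Combining the two halves yields the fundamental equation
\begin{equation*}
\tau(\phi)=\frac{n-2}{2}\,d\phi\bigl(\gradh{\ln\lambda^2}\bigr)-(m-n)\,d\phi(\mu^{\V}),
\end{equation*}
valid on the set of regular points.

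Given this formula the theorem is immediate, because $d\phi$ restricts to an injective map on $\H$ and both $\gradh{\ln\lambda^2}$ and $\mu^{\V}$ are horizontal, so $\tau(\phi)$ vanishes precisely when its horizontal argument does. For part (i), the hypothesis $n=2$ annihilates the gradient term and leaves $\tau(\phi)=-(m-n)\,d\phi(\mu^{\V})$, which vanishes exactly when $\mu^{\V}=0$, i.e. when the fibres are minimal; this is the equivalence of harmonicity with minimal fibres. For part (ii), when $m>n\ge 3$ both coefficients $n-2$ and $m-n$ are nonzero, so the vanishing of any two of the three horizontal quantities $\tau(\phi)$, $\gradh{\ln\lambda^2}$ and $\mu^{\V}$ forces the vanishing of the third. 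Identifying these with harmonicity, horizontal homotheticity (constancy of $\lambda$ along $\H$), and minimality of the fibres respectively gives exactly the stated ``any two imply the other'' conclusion.
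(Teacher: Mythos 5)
The paper offers no proof of this statement at all: it is quoted directly from Baird and Eells \cite{Bai-Eel} (see also Section 4.5 of \cite{Bai-Woo-book}), so there is nothing in the paper to compare your argument against, and it must be judged on its own. Your route is the standard one and its architecture is right: everything reduces to the fundamental equation
$$\tau(\phi)\;=\;\pm\frac{n-2}{2}\,d\phi\bigl(\gradh{\ln\lambda^2}\bigr)\;-\;(m-n)\,d\phi(\mu^{\V}),$$
combined with the facts that both arguments of $d\phi$ are horizontal and that $d\phi$ is injective on $\H$ at regular points, after which the dichotomy on the coefficients $n-2$ and $m-n$ delivers (i) and (ii) exactly as you describe. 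Your vertical trace computation is complete and correct. The genuine gap is the horizontal trace: you announce that differentiating $h(d\phi X,d\phi Y)=\lambda^2g(X,Y)$ along the frame and tracing ``produces exactly the derivatives of $\lambda^2$'' with coefficient $n-2$, but you never perform the computation, and this is precisely the nontrivial content of the fundamental equation --- as written it is a plan, not a verification. Symptomatically, your sign on the gradient term is opposite to the one in \cite{Bai-Woo-book}, where the identity reads $\tau(\phi)=-(n-2)\,d\phi(\text{grad}_{\H}\ln\lambda)-(m-n)\,d\phi(\mu^{\V})$; the sign is immaterial for the vanishing argument, but the discrepancy is exactly the sort of thing the omitted computation would have settled. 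A smaller point for part (ii): your identity lives only on the regular set, so you should note that a non-constant horizontally homothetic map has no critical points and that the theorem only asks for minimality at regular points, so no information is lost there.
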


In particular, the conditions characterizing harmonic morphisms
into a surface $N^2$ only depend on the conformal structure of $N^2$.

\section{The $3$-dimensional Lie groups}\label{section-Bianchi}

At the end of the 19th century, L.~Bianchi classified the $3$-dimensional
real Lie algebras.  They fall into nine disjoint types I-IX.
Each contains a single isomorphy class except types VI and VII which
are continuous families of different classes.  For later reference
we list below Bianchi's classification and notation for the corresponding
simply connected Lie groups. We also equip these Lie groups with the
left-invariant metric for which the given basis $\{X,Y,Z\}$ of each Lie
algebra is orthonormal at the identity.

\begin{example}[Type I]
The Abelian Lie algebra $\rn^3$; the corresponding simply connected
Lie group is of course the Abelian group $\rn^3$ which we equip with
the standard flat metric.
\end{example}

\begin{example}[II] The Lie algebra $\n_3$ with a basis $X,Y,Z$ satisfying
$$[X,Y]=Z.$$
The corresponding simply connected Lie group is the nilpotent Heisenberg
group $\text{Nil}^3$.
\end{example}

\begin{example}[III]
The Lie algebra $\h^2\oplus\rn=\text{span}\{X,Y,Z\}$,
where $\h^2$ is the two-dimensional Lie algebra with basis $X,Y$ satisfying
$$[Y,X]=X.$$
The corresponding simply connected Lie group is denoted by $H^2\times\rn$.
Here $H^2$ is the standard hyperbolic plane.
\end{example}

\begin{example}[IV]
The Lie algebra $\g_4$ with a basis $X,Y,Z$ satisfying
$$[Z,X]=X,\quad [Z,Y]=X+Y.$$
The corresponding simply connected Lie group is denoted by $G_4$.
\end{example}

\begin{example}[V]
The Lie algebra $\h^3$ with a basis $X,Y,Z$ satisfying
$$[Z,X]=X,\quad [Z,Y]=Y.$$
The corresponding simply connected Lie group $H^3$ is the standard
hyperbolic $3$-space of constant sectional curvature $-1$
\end{example}

\begin{example}[VI] The Lie algebra $\sol_\alpha^3$, where $\alpha>0$,
is the Lie algebra with basis $X,Y,Z$ satisfying
$$[Z,X]=\alpha X,\quad [Z,Y]=-Y.$$
The corresponding simply connected Lie group is denoted by
$\text{Sol}_\alpha^3$.  The group Sol mentioned in the introduction
is actually $\text{Sol}_1^3$.
\end{example}

\begin{example}[VII] The Lie algebra $\g_7(\alpha)$, where
$\alpha\in\rn$, is the the Lie algebra with basis $X,Y,Z$ satisfying
$$[Z,X]=\alpha X-Y,\quad [Z,Y]=X+\alpha Y.$$
The corresponding simply connected Lie group
is denoted by $G_7(\alpha)$.
\end{example}

\begin{example}[VIII] The Lie algebra $\slr 2$ with a basis $X,Y,Z$
satisfying
$$[X,Y]=-2Z,\quad [Z,X]=2Y,\quad [Y,Z]=2X.$$
The corresponding simply connected Lie group is denoted by
$\widetilde{\SLR 2}$ as it is the universal cover of the special
linear group $\SLR 2$.
\end{example}

\begin{example}[IX] The Lie algebra $\su 2$ with a basis $X,Y,Z$
satisfying
$$[X,Y]=2Z,\quad [Z,X]=2Y,\quad [Y,Z]=-2X.$$
The corresponding simply connected Lie group is of course $\SU 2$.
This is isometric to the standard $3$-sphere of constant curvature $+1$.
\end{example}

\section{The classification}

In this section the following theorem of Baird and Wood is applied
to get a complete classification of those $3$-dimensional Riemannian
Lie groups which admitting harmonic morphisms to surfaces.

\begin{theorem}\cite{Bai-Woo-book}\label{thm:uniqueness}
Let $M$ be a $3$-dimensional Riemannian manifold with
non-constant sectional curvature. Then there are at most two
distinct conformal foliations by geodesics of $M$. If there is
an open subset on which the Ricci tensor has precisely two distinct
eigenvalues, then there is at most one conformal foliation by
geodesics of $M$.
\end{theorem}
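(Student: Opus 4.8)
The plan is to reduce the global statement to a pointwise problem in linear algebra for the Ricci operator, the bridge being the geometric fact that a conformal foliation by geodesics tightly constrains the Ricci curvature in the directions orthogonal to the leaves.

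First I would isolate the structural input on which the theorem rests. Let $V$ be a unit vector field whose integral curves are the leaves of a conformal foliation by geodesics, and let $\H=V^\perp$ be the horizontal distribution. Writing $A(X,Y)=g(\nabla_X V,Y)$ for $X,Y\in\H$, the geodesic condition is $\nabla_V V=0$, while conformality (shear-freeness) says that the symmetric part of $A$ is pure trace, $A_{\mathrm{sym}}=\tfrac{1}{2}(\operatorname{div}V)\,g|_{\H}$. Substituting these into the Riccati equation governing the evolution of $A$ along $V$, together with the remaining structure equations controlling the twist and the mixed components, one obtains the Baird--Wood conclusion quoted in the Introduction: the Ricci tensor is conformal on $\H$, i.e. there is a function $\lambda$ with $\mathrm{Ric}(X,Y)=\lambda\,g(X,Y)$ for all $X,Y\perp V$. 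I would regard this derivation as the principal geometric obstacle, since it demands careful bookkeeping of shear, twist and expansion and of the precise way curvature enters their propagation; everything afterward is algebra.

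Second, fix a point at which the sectional curvature is non-constant, so that the self-adjoint Ricci operator $S=\mathrm{Ric}$ is not a multiple of the identity, and restate the constraint as $(S-\lambda I)(V^\perp)\subseteq\mathrm{span}(V)$. Diagonalising $S$ with eigenvalues $\mu_1\le\mu_2\le\mu_3$ and orthonormal eigenvectors $e_1,e_2,e_3$, I would count the unit directions $V$ (up to sign) satisfying this. Since $(S-\lambda I)|_{V^\perp}$ then has rank at most one, it has nontrivial kernel, which forces $\lambda\in\{\mu_1,\mu_2,\mu_3\}$ and forces a unit $\lambda$-eigenvector $e_i$ to lie in $V^\perp$; writing $V=\alpha e_j+\beta e_k$ in the complementary eigenplane and imposing the containment on the remaining direction reduces everything to the single scalar relation $\alpha^2(\mu_k-\lambda)+\beta^2(\mu_j-\lambda)=0$.

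Third, I would read off the two cases. If $S$ has three distinct eigenvalues, the relation is solvable only for the middle value $\lambda=\mu_2$, yielding $\alpha^2:\beta^2=(\mu_2-\mu_1):(\mu_3-\mu_2)$ and hence exactly the two bisector directions $V_\pm=\alpha e_1\pm\beta e_3$ of the extreme eigendirections; this gives the bound of two. If $S$ has precisely two distinct eigenvalues the relation admits no solution with mixed coefficients, and the only admissible direction is the eigenvector of the simple eigenvalue, giving the bound of one. Because non-constant sectional curvature guarantees that $S$ is non-umbilical on an open dense set, these pointwise bounds globalise: a conformal foliation by geodesics is determined by its unit direction field, which at each regular point must be one of the admissible directions, so there are at most two such foliations in general and at most one under the two-eigenvalue hypothesis. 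The remaining care is to show that distinct foliations give genuinely distinct direction fields on the dense regular set—a continuity and unique-continuation argument—and to verify that the excluded umbilical locus, where every direction is admissible, is too thin to spawn additional foliations.
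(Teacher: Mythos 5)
The paper does not actually prove this statement: it is quoted verbatim from Section~10.6 of the Baird--Wood monograph \cite{Bai-Woo-book}, so there is no in-paper proof to compare against. Measured against the source it cites, your reconstruction follows the same route: the geometric input that the Ricci tensor of a $3$-manifold carrying a conformal foliation by geodesics is conformal on the horizontal distribution, followed by a pointwise eigenvalue analysis of the Ricci operator. Your linear algebra is correct and complete: the condition $(S-\lambda I)(V^\perp)\subseteq\operatorname{span}(V)$ does force $\lambda$ to be an eigenvalue with an eigenvector in $V^\perp$, the scalar relation $\alpha^2(\mu_k-\lambda)+\beta^2(\mu_j-\lambda)=0$ is the right reduction, and the case split (exactly two admissible directions when the three eigenvalues are distinct, exactly one when precisely two are distinct) is right.

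Two points deserve flagging. First, the entire first step --- that shear-freeness plus the geodesic condition force $\operatorname{Ric}|_{\H}$ to be conformal --- is the real content of the theorem, and you only describe the strategy (Riccati/structure equations for shear, twist and expansion) without carrying it out; since the paper itself states this fact in its introduction as a known result of Baird and Wood, this is a defensible black box, but be aware that it is where all the work lives. Second, your globalisation contains a genuine error: non-constant sectional curvature does \emph{not} imply that the non-umbilical locus is open and \emph{dense}; it only gives a non-empty open set (a metric can be isotropic, hence of constant curvature, on an open region by Schur's lemma, and there every direction is pointwise admissible). The correct way to close the argument is: on the non-empty open non-isotropic set, pigeonhole plus a Baire-type argument shows that among any three foliations two must have coinciding tangent fields on some open set, and one then invokes a unique-continuation property for conformal foliations by geodesics (two such foliations agreeing on an open set agree everywhere) to conclude they are globally equal. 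You allude to ``a continuity and unique-continuation argument'' but do not supply it, and without it the pointwise bound does not yield the global count --- note that the paper's own proof of Theorem~\ref{theo:3-foliation} leans on exactly this rigidity, so it cannot be waved away.
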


The simply connected $3$-dimensional Riemannian Lie groups of constant
sectional curvature are the standard $\rn^3$, $H^3$ and $\SU 2$, modulo
a constant multiple of the metric.  For the other cases we have the
following result.

\begin{theorem}\cite{Gud-Sve-4}\label{theo:3-foliation}
Let $G$ be a connected $3$-dimensional Lie group with a
left-invariant metric of non-constant sectional curvature. Then any
local conformal foliation by geodesics of a connected open
subset of $G$ can be extended to a global conformal foliation by
geodesics of $G$.  This is given by the left-translation of a
1-parameter subgroup of $G$.
\end{theorem}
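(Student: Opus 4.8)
The plan is to show that the vertical direction field of any conformal foliation by geodesics is forced, pointwise, into a finite set of directions determined purely by the Ricci operator, and then to exploit the homogeneity of the left-invariant metric to promote this into a left-invariant field. Throughout I write $V$ for the unit field tangent to the geodesic leaves, $\V=\mathrm{span}\{V\}$ for the vertical distribution and $\H=V^\perp$ for the horizontal one. The first ingredient is the result of Baird and Wood recalled in the introduction (see Section 10.6 of \cite{Bai-Woo-book}): for a conformal foliation by geodesics of a $3$-manifold the Ricci curvature must be conformal on $\H$, i.e.\ the restriction of the symmetric Ricci operator $\rho$ to $\H$ is a multiple of the identity. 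This is a pointwise algebraic constraint relating $V$ to $\rho$ at each point.

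First I would analyse this constraint as a piece of linear algebra. Writing the eigenvalues of $\rho$ as $\mu_1\le\mu_2\le\mu_3$, the condition ``$\rho|_{\H}$ conformal'' says that the quadratic form $x\mapsto\langle(\rho-\lambda)x,x\rangle$ vanishes identically on the plane $\H$, i.e.\ $\H$ is a two-dimensional totally isotropic subspace for $\rho-\lambda$. A signature count shows this is possible only for $\lambda=\mu_2$ when the $\mu_i$ are pairwise distinct, yielding exactly two admissible directions $V$ (a symmetric pair lying in the plane spanned by the extreme eigenvectors $e_1,e_3$), and only for $\lambda$ equal to the doubled eigenvalue when precisely two of the $\mu_i$ coincide, yielding the single direction along the simple-eigenvalue axis. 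The remaining case $\mu_1=\mu_2=\mu_3$ is Einstein, hence of constant sectional curvature, and is excluded by hypothesis. This reproduces the counting of Theorem \ref{thm:uniqueness}.

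Next I would invoke left-invariance. Since every left translation $L_g$ is an isometry of $G$, the operator $\rho$ is left-invariant, so its eigenvalues are constant on $G$ and the admissible directions at a point $g$ are precisely the images under $(dL_g)_e$ of those at the identity. Hence the one or two admissible vertical directions assemble into one or two left-invariant unit line fields $V_1$ (and possibly $V_2$) on all of $G$, which are everywhere distinct. The vertical field of the given local foliation on the connected open set must at each point be one of these admissible directions; being continuous, and the $V_i$ being everywhere distinct, it coincides as a line field with a single $V_i$ throughout. Finally, a left-invariant unit field $V_i$ has integral curves $t\mapsto g\exp(t\,V_i(e))$, the left-translates of the $1$-parameter subgroup generated by $V_i(e)\in\g$; these are globally defined, so the local foliation extends to the asserted global one. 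That the extension is again a conformal foliation by geodesics is immediate from homogeneity: the geodesic and horizontal-conformality conditions hold on the original open set by hypothesis, any two leaves of $V_i$ differ by a left translation, and the family $\{L_g\}$ acts transitively by isometries. I expect the main obstacle to be the first step—checking that the Baird--Wood conformality condition genuinely pins the vertical direction to the finite algebraically determined set, so that no spurious one-parameter family of solutions appears for degenerate configurations of the $\mu_i$; once this rigidity is in hand, the rest is a matter of transporting it around $G$ by left translation.
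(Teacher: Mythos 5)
Your argument is correct, but it establishes left-invariance by a mechanism different from the one used in the paper. The paper's proof never touches the Ricci tensor directly: it translates the given local foliation $\V$ by $L_g$ for $g$ in a small neighbourhood $U''$ of $e$, observes that $dL_g\V$ is again a conformal foliation by geodesics, and invokes the foliation-level uniqueness statement (Theorem \ref{thm:uniqueness}) together with a continuity argument to force $dL_g\V=\V$ near $e$; this yields local left-invariance, after which the global left-invariant extension $\tilde\V$ is shown to be totally geodesic and conformal because $B^{\tilde\V}$ and the conformality tensor vanish on an open set and are left-invariant. You instead descend to the pointwise algebraic constraint behind Theorem \ref{thm:uniqueness} --- that the Ricci tensor is conformal on $\H$ --- and show by the signature analysis of $\rho-\lambda$ that the vertical line is confined at each point to a set of at most two directions canonically determined by $\rho$; since $\rho$ is left-invariant these assemble into left-invariant line fields, and connectedness of $U$ selects one of them. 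This buys you a cleaner route: you avoid the nested neighbourhoods $U'\supset U''$ and the somewhat delicate ``by continuity'' step needed to rule out the translate landing on the \emph{other} admissible foliation, because your direction field is pinned pointwise rather than foliation-by-foliation. The costs are that you need the stronger pointwise form of the Baird--Wood result as input (the paper only cites the uniqueness count, though it does state the Ricci-conformality result in the introduction), you must redo the eigenvalue counting that the paper takes as a black box, and your argument leans on homogeneity in one extra place: you need that no point of $G$ is isotropic for $\rho$, which follows here because a left-invariant Einstein metric in dimension three has constant sectional curvature, but which would not be available in the general setting of Theorem \ref{thm:uniqueness}. Both proofs conclude identically, by integrating the left-invariant field to the left-translates of a $1$-parameter subgroup and propagating the geodesic and conformality conditions by isometry.
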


\begin{proof}Assume that $\V$ is a conformal foliation by geodesics
of some connected neighbourhood $U$ of the identity element $e$ of
$G$ and denote by $\H$ the orthogonal complement of $\V$. Let
$U'\subset U$ be a connected neighbourhood of $e$ such that $gh\in
U$ for all $g,h\in U'$, and let $U''\subset U'$ be a connected
neigbourhood of $e$ for which $g^{-1}\in U'$ for all $g\in U''$.

For any $g\in G$, we denote by $L_g:G\to G$ left translation by
$G$. Take $g\in U''$ and consider the distribution
$dL_g\V\big\vert_{U'}$, obtained by restricting $\V$ to $U'$ and
translating with $g$. As $L_g$ is an isometry, this is also a
conformal foliation by geodesics of $L_g U'$, which is a connected
neigbourhood of $e$. It is clear from Theorem \ref{thm:uniqueness}
and by continuity, that this distribution must coincide with $\V$
restricted to $L_g U'$. It follows that $d(L_g)_h(\V_h)=\V_{gh}$
for all $g,h\in U''$. In particular we have
$$d(L_g)_e(\V_e)=\V_g\qquad(g\in U'').$$

Define a 1-dimensional distribution $\tilde\V$ on $G$ by
$$\tilde\V_g=(dL_g)_e(\V_e)\qquad(g\in G).$$ Its horizontal
distribution $\tilde\H$ is clearly given by left translation of
$\H_e$. From the above we see that
$$\tilde\V\big\vert_{U''}=\V\big\vert_{U''}.$$ It follows that
$$B^{\tilde\V}\big\vert_{U''}=B^{\V}\big\vert_{U''}=0,$$ and since
$\tilde\V$ is left-invariant, it follows that $B^{\tilde\V}=0$
everywhere, i.e., $\tilde\V$ is totally geodesic. In the same way
we see that $\tilde\V$ is a conformal distribution and, by Theorem
\ref{thm:uniqueness}, we see that $$\tilde\V\big\vert_{U}=\V.$$

This shows that $\V$ extends to a global conformal, totally
geodesic distribution $\tilde\V$, which is left-invariant. By
picking any unit vector $V\in \V_e$, we see that the corresponding
foliation is given by left translation of the 1-parameter subgroup
generated by $V$.
\end{proof}

Let $G$ be a $3$-dimensional Lie group with Lie algebra $\g$ equipped
with a left-invariant Riemannian metric such that $\{X,Y,Z\}$ is an
orthonormal basis for $\g$.  Assume that the $1$-dimensional
left-invariant foliation generated by $Z\in\g$ is minimal and
horizontally conformal i.e. producing harmonic morphisms.
Then it is easily
seen that the bracket relations for $\g$ are of the form
\begin{eqnarray*}
\lb XY&=&xX+yY+zZ,\\
\lb ZX&=&aX+bY,\\
\lb ZY&=&-bX+aY,
\end{eqnarray*}
where $a,b,x,y,z\in\rn$.  In this case the Jacobi identities
for the Lie algebra
$\g$ imply that $$az=0, \ \ ax+by=0,\ \ bx-ay=0.$$
The following three families of $3$-dimensional Lie algebras give a complete
classification.

\begin{example}
With $a=b=0$ we yield a $3$-dimensional family of Lie groups with bracket
relation
\begin{eqnarray*}
\lb XY&=&xX+yY+zZ.
\end{eqnarray*}
If $x=y=z=0$ then the type is I. If $x$ or $y$ non-zero then
we have type III. If $z\neq 0$ and $x=y=0$, then the type is II.
\end{example}

\begin{example}
In the case of $x=y=z=0$ we get semi-direct products $\rn^2\rtimes \rn$
with bracket relations
\begin{eqnarray*}
\lb ZX&=&aX+bY,\\
\lb ZY&=&-bX+aY.
\end{eqnarray*}
If $b\neq 0$ then the Lie algebra is of type VII.
If $b=0$ then the Lie algebra is of type V or of type I if also $a=0$.
\end{example}

\begin{example}
When $x=y=a=0$ we obtain a $2$-dimensional family with the bracket relations
\begin{eqnarray*}
\lb XY&=&zZ,\\
\lb ZX&=&bY,\\
\lb ZY&=&-bX.
\end{eqnarray*}
When $bz<0$ the Lie algebra is of type VIII and of type IX if $bz>0$.
The case when $z=0$ and $b\neq 0$ is of type VII ($\alpha=0$),
and the case when $b=0$ and $z\neq 0$ is of type II.
The case when $b=z=0$ is of type I.
\end{example}

With the above analysis we have proved the following classification result.

\begin{theorem}
Let $G$ be a $3$-dimensional Lie group with Lie algebra $\g$.
Then there exists a left-invariant Riemannian metric $g$ on $G$ and a
left-invariant horizontally conformal foliation on $(G,g)$ by geodesics
if and only if the Lie algebra $\g$ is neither of type IV nor of type VI.
\end{theorem}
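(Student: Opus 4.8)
The plan is to translate the geometric hypothesis into a purely Lie-algebraic normal form and then to read off the admissible isomorphism classes from the three families of examples above. The central step is the equivalence: $(G,g)$ carries a left-invariant horizontally conformal foliation by geodesics if and only if $\g$ admits a $g$-orthonormal basis $\{X,Y,Z\}$ in which
\begin{eqnarray*}
\lb XY&=&xX+yY+zZ,\\
\lb ZX&=&aX+bY,\\
\lb ZY&=&-bX+aY,
\end{eqnarray*}
for some $a,b,x,y,z\in\rn$. This is precisely the normal form analysed immediately before the examples, so once the equivalence is established the theorem reduces to reading off the isomorphism types that occur, which is exactly what the examples do.

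For the forward direction I would note that a left-invariant one-dimensional foliation is generated by a single unit vector $Z\in\g$, and complete $Z$ to an orthonormal basis $\{X,Y,Z\}$ of $(\g,g)$. The Koszul formula for the Levi-Civita connection gives $\langle\nabla_Z Z,W\rangle=-\langle[Z,W],Z\rangle$ for every $W$, so the geodesic condition $\nabla_Z Z=0$ is equivalent to $[Z,X],[Z,Y]\in\H:=Z^\perp$, that is, to $\ad_Z$ preserving $\H$. A second short computation shows that horizontal conformality of the foliation amounts to the symmetric part (relative to $g$) of $\ad_Z\big\vert_\H$ being a multiple of the identity, which for a $2\times2$ matrix forces $\ad_Z\big\vert_\H=\left(\begin{smallmatrix} a & -b\\ b & a\end{smallmatrix}\right)$; since $[X,Y]$ is unconstrained, the normal form results. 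The converse is immediate: declaring the basis orthonormal defines $g$, the same two computations show that the $Z$-foliation is geodesic and horizontally conformal, and by Theorem \ref{theo:B-E}(i) it is moreover one producing harmonic morphisms.

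It then remains to determine which Bianchi types occur in the normal form, and here I would invoke the examples directly. The Jacobi identities impose $az=0$, $ax+by=0$ and $bx-ay=0$; these are solved exhaustively by the three families above, because $a=0$ forces $b=0$ or $x=y=0$, while $a\neq0$ forces $z=0$ (from $az=0$) and $x=y=0$ (the coefficient matrix $\left(\begin{smallmatrix} a & b\\ b & -a\end{smallmatrix}\right)$ of the relations $ax+by=0$, $bx-ay=0$ being invertible). As recorded in the examples, these families realize, up to isomorphism, exactly the types I, II, III, V, VII, VIII and IX. Hence $\g$ admits a basis in the normal form if and only if it is isomorphic to one of these seven types, and together with the equivalence this proves the theorem.

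The genuine content, and the step I expect to be the main obstacle, is the ``only if'': excluding types IV and VI over \emph{every} left-invariant metric and \emph{every} choice of foliation direction. What makes this manageable is that admitting the normal form is an isomorphism-invariant property of $\g$---equivalent to $\g$ being isomorphic to one of the enumerated algebras---and is therefore insensitive to the metric and to the foliation direction. Since types IV and VI are distinct Bianchi types absent from the list I, II, III, V, VII, VIII, IX, they admit no such metric and foliation, whereas every remaining type does. Heuristically, $\ad_Z\big\vert_\H$ in the normal form is always a scalar multiple of a rotation, so it has either a repeated real eigenvalue or a complex-conjugate pair, and can model neither the distinct real eigenvalues of type VI nor the non-diagonalizable Jordan block of type IV.
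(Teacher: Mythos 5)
Your proposal is correct and follows essentially the same route as the paper: derive the normal form for the brackets from the geodesic and horizontal conformality conditions, impose the Jacobi identities $az=0$, $ax+by=0$, $bx-ay=0$, and read off from the resulting three families that exactly the types I, II, III, V, VII, VIII and IX occur. You merely make explicit two steps the paper leaves as ``easily seen'' --- the Koszul-formula derivation of the normal form and the exhaustiveness of the case split on $a$ --- both of which check out.
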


Note that in the cases of type I, II, III, V, VII, VIII and IX the
possible left-invariant Riemannian metrics are completely determined
via isomorphisms to the standard examples presented in
Section \ref{section-Bianchi}.

\end{document}